% Abstracts  must be prepared in \LaTeX \ and according to the% following scheme:
\documentclass[12pt,a4paper,fleqn]{article}
\usepackage{amsmath,amssymb,euscript,amsthm,amsfonts}

\usepackage{exscale,amsthm,amssymb}
\usepackage{epsfig, graphicx}
\usepackage[english]{babel}
\usepackage{array}

%%%%%%%%%%%%%%%%%%%%%%%%%%
 \topmargin-2cm
 \textheight25.1cm
 \oddsidemargin0.5cm
 \textwidth15.5cm
 \parindent0cm
 \parskip2ex plus0.5ex minus0.5ex

\newtheorem{definition}{Definition}
\newtheorem{umova}{Assumption}
\newtheorem{theorem}{Theorem}
\newtheorem{lemma}{Lemma}

\theoremstyle{definition}
\theoremstyle{remark}

\newcommand{\bol}[1]{\mbox{\boldmath$#1$}}
\newcommand{\bmu}{\bol{\mu}}
\newcommand{\bxi}{\bol{\xi}}
\newcommand{\balpha}{\bol{\alpha}}
\newcommand{\bbeta}{\bol{\beta}}
\newcommand{\bgamma}{\bol{\gamma}}
\newcommand{\by}{\mathbf{y}}
\newcommand{\bx}{\mathbf{x}}
\newcommand{\br}{\mathbf{r}}
\newcommand{\bn}{\mathbf{n}}
\newcommand{\bk}{\mathbf{k}}
\newcommand{\cE}{{\mathcal{E}}}
\newcommand{\cN}{{\mathcal{N}}}
\newcommand{\cP}{{\mathcal{P}}}
\newcommand{\natvec}{{\cN}_{0}^\infty}
\newcommand{\mE}{\mathrm{E}}

\newcommand{\ba}{\mathbf{a}}

\newcommand{\pxi}{\bxi_{\bx t}(X)}
\newcommand{\pmu}{\bmu_{\bx t}(X)}
\newcommand{\cA}{\mathcal{A}}

\begin{document}
%\renewcommand{\proofname}{Proof}
%\makeatletter
%\headsep 10 mm
%\footskip 10 mm
    \title{Asymptotic behaviour of the $S$-stopped branching processes with countable state space}

    \author{Iryna Kyrychynska\thanks{Chair of Theoretical and Applied Statistics, Department of Mechanics and Mathematics, Ivan Franko National University of Lviv, Universytetska Str. 1, Lviv, 79000, Ukraine.}, Ostap Okhrin\thanks{C.A.S.E. - Center for Applied Statistics and Economics, Ladislaus von Bortkiewicz Chair of Statistics of Humboldt-Universit{\"a}t zu Berlin, Spandauer Stra{\ss}e $1$, D-$10178$ Berlin, Germany. Email: ostap.okhrin@wiwi.hu-berlin.de.}, Yaroslav Yeleyko\thanks{Chair of Theoretical and Applied Statistics, Department of Mechanics and Mathematics, Ivan Franko National University of Lviv, Universytetska Str. 1, Lviv, 79000, Ukraine.}}
% \date{\today}

    \maketitle
    
    \begin{center}
        This paper was published in \emph{Visn. L'viv. Univ., Ser. Mekh.-Mat.} (Bulletin of the
        Lviv University, Series in Mechanics and Mathematics) Vol.67, pp.119-129 (2007).

        Reviewed in \emph{Zentralblatt f\"ur Mathematik} Zbl 1164.60418.\\[2cm]
    \end{center}

{\small \noindent \textbf{Abstract:}
    The starting process with countable number of types $\mu(t)$
    generates a stopped branching process $\xi(t)$. The starting process stops, by falling into the nonempty set
    $S$. It is assumed, that the starting process is subcritical, indecomposable and noncyclic.
    It is proved, that the extinction probability converges to the cyclic function with period 1.
}

\vspace*{0.5cm} {\small \textbf{Keywords}: branching processes; Markov chain; extinction
probability; asymptotic behavior.\\
 \textbf{Subject Classification}: 60J80.
}

\textbf{1. }Let us consider a measured state space $(X,\cA)$, where $\cA$ is the $\sigma$-algebra on
$X$. On this space we consider unbreakable homogenous Markov process with transition probability $P(t,x,A)$, where $t$ denotes time, $x \in X$ and $A\in \cA$. Considering every trajectory of the given process as an evolution of the motion of a particle, $P(t,x,A)$ can be interpreted as a probability that a particle, which starts its motion from $x \in X$, falls into the set $A\in \cA$ till the time $t$. It is assumed, that the time is discrete and the
lifetime of a particle is equal to $1$. At the end of its life the particle promptly gives rise to a number of offsprings, starting position of which are randomly distributed on the space $X$. The number and the position of these offsprings depends only on the position of the particle-ancestor at the transformation time point. Further every offspring evolutes analogously and independently of other particles.

Let $\mu_{xt}(A)$ be a random measure, which for every $A \in \cA$ is equal to the number of the particles at time point $t$, types of which fall into set $A$, under condition that the process started with one particle $x\in X$. $\mu_{t}(A)$ is a random measure equal to the number of particles at the time $t$, which types are from the set $A$, but without any knowledge about starting group of particles.

Further we assume, that the space $X$ consists of a countable number of elements
$x_1,\ldots,x_n,\ldots$. This means that the set of types of particles $\{T_1,\ldots,T_n,\ldots\}$
is countable.

Based on the measure $\mu_{xt}(A)$ we introduce the multivariate measure $\bmu_{\bx t}(A)$
\begin{equation*}
    \bmu_{\bx t}(A)=\left\{
        \begin{array}{cl}
        \sum \limits_{i=0}^\infty \sum \limits_{j=1}^{n_i}\mu_{x_{ij} t}(x_m), & \mbox{if } x_m \in A \\
        0, & \mbox{else}
        \end{array}
    \right\}_{m=0}^\infty,
\end{equation*}

where $\bx = \{x_{11},\ldots,x_{1n_1},x_{21},\ldots, x_{2n_2},\ldots\},\ x_{ij}\in X$ is the
$j$-th element $i$-th type.

Let us denote $\cN_{0}=\{0,1,2,\ldots\}$, and respectively $\natvec$ is an
infinite dimensional measured space with elements $x_i \in \cN_{0}$.

Having $P(t,x,A)$ let us introduce $\widehat{P}(t,\bx,\by),\, (\bx,\by \in
\natvec)$, where $\widehat{P}$ is a probability that we obtain vector $\by$ till
time $t$, assuming that we started from $\bx$. $\widehat{P}$ could be rewritten in terms of $\bmu_{\bx t}$ as
\[\widehat{P}(t,\bx,\by) = P\{\pmu=\by\}.\]
Let $\cE(i)=(\delta_{i1},\ldots,\delta_{in},\ldots),$ where $\delta_{ij}$ is the Kronecker
symbol, $\delta_{\bx\by}=\prod_{i=1}^\infty\delta_{x_iy_i}$, $\cE(i)$ is the
particle of the $i$-th type. We also assume, that $a^b = a_1^{b_1}a_2^{b_2}\cdots a_n^{b_n}\cdots,\; a! = a_1!a_2!\cdots a_n!\cdots,\; \overline{a} = a_1+\cdots+a_n+\cdots,\; a_i^{[b_i]} = a_i(a_i-1)\cdots(a_i-b_i+1)$.
\begin{definition}\label{ftvirnyj}
    Functional
    \[F(s(\cdot))=F(s)=\mE \exp{\left\{\int \ln{s(x)}\mu(dx)\right\}}\]
    is called a generated functional of the random measure $\mu$, where $s(x)$ is a measured bounded function.
\end{definition}
Generated functional $F(s)$ is always defined, when $0<|s(x)|\leq 1$ and integral $\int
\ln{s(x)}\,\mu(dx)$ exists.

For our process the generated functional is given as
\begin{equation*}
    h(t,s(\cdot))=\mE\exp{\left\{\int_X \ln{s(z)}\bmu_{t}(dz)\right\}},
\end{equation*}
where $\bmu_{t}$ is the same multivariate measure as $\bmu_{\bx t}$ but not taking into account any combination of the starting position of the process. Further we will consider the case $s(\cdot) = const = s = (s_1,s_2,\ldots)$. It is easy to check
whether the introduced generated functional is generated, as in the case of finite number of types (in that case it is not a functional but a function).

Let us denote
\begin{eqnarray*}
    h^i(t,s) &=& h^{\cE(i)}(t,s), \\
    h^{\bbeta}(t,s) &=& \big((h^ {\cE(1)}(t,s))^{\bbeta_1}, (h^
    {\cE(2)}(t,s))^{\bbeta_2},\ldots\big), \\
    h(t,s) &=& \big(h^{\cE(1)}(t,s), h^ {\cE(2)}(t,s),\ldots\big).
\end{eqnarray*}
It is proved in [3], that the introduced generated function follows the main functional
equation ($\forall \; t,\tau = 0,1,2,\ldots$)
\begin{equation*}
    h(t+\tau,s)=h(t,h(\tau,s)).
\end{equation*}
Let us fix the finite subset $S \subset \natvec,\, 0\notin S$. {\it Stopped} or
{\it$S$-stopped} multitype branching process is the process $\pxi$, defined for $t=1,2,\ldots$ and $\bx\in\natvec$ by equations
\begin{eqnarray*}
    \pxi=\left\{
    \begin{array}{rl}
        \pmu, &\; \mbox{if } \forall v, \; 0\leq v < t, \; \bmu_{\bx v}(X) \notin S \\
        \bmu_{\bx u}(X), &\; \mbox{if } \forall v, \; 0\leq v < u, \;
        \bmu_{\bx v}(X) \notin S, \; \bmu_{\bx u}(X) \in S, \; u<t.
    \end{array}
    \right.
\end{eqnarray*}
From this, for the $S$-stopped process $\pxi$, points of the
set $S$ are additional states of absorption compared to the process $\bmu_{\bx
t}(X)$. The latter had only one point of absorption $0$. In contrast to the process ${\boldsymbol
\mu}_{\bx t}(X)$, in the $S$-stopped branching process $\pxi$
single particles in generation $t$ multiplies independently following probability law defined by the generated functional $h(\cdot)$, only if $\pxi \notin
S$. If the random vector $\pxi$ falls into the set $S$, the evolution
of the process stops.

Since the process $\pmu$ is a Markov chain, then
\begin{equation*}
    \widehat{P}(t_1+t_2,\balpha,\bbeta)=\sum_{\bgamma\in\natvec}\widehat{P}(t_1,\balpha,\bgamma)\widehat{P}(t_2,\bgamma,\bbeta).
\end{equation*}
For further needs, we also consider probabilities $\widetilde{P}(t,\balpha,\br)$, defined as
\begin{eqnarray}\label{ptilde}
    \widetilde{P}(t,\balpha,\br)=\left\{
    \begin{array}{ll}
        \widehat{P}(1,\balpha,\br),&t=1;\\
        \sum_{\bbeta\notin
        S}\widehat{P}(1,\balpha,\bbeta)\widetilde{P}(t-1,\bbeta,\br),&t\geq2.
    \end{array}
    \right.
\end{eqnarray}
It is easy to see, that $\widetilde{P}(l,\balpha,\br)$ is a conditional
probability of the event
\begin{equation*}
    \big\{\bmu_{\balpha l}(X)=\br\big\}\bigcap \left(\bigcap_{l'=1}^{l-1}\big\{
\bmu_{\balpha l'}(X)\notin S\big\}\right).
\end{equation*}
Let
\begin{equation*}
    q^\bn_\br(t)=P\big\{ \bxi_{\bn t}(X) = \br\big\}
\end{equation*}
be the probability of an extinction of the $S$-stopped branching process $\bxi_{\bx t}(X)$ into state $\br\in S$ till time $t$, starting from state $\bn \in {\cN_0^\infty}$.

\textbf{2. Main facts. }
\begin{theorem}
    For any $\bn\notin S, \; \bn\neq 0,\; \br\in S,\; t\geq 1$ holds
    \begin{equation}
        \label{tma1:3} q^\bn_\br(t)=\sum_{\balpha\in
        S}\sum^t_{l=1}c_{\balpha\br}(t,l)\widehat{P}(l,\bn,\balpha),
    \end{equation}
    where coefficients $c_{\balpha\br}(t,l)$ can be found from
    %labels(3,4,5)
    \begin{eqnarray}
        \label{equ:cdef1} c_{\balpha\br}(t+1,l+1) &=& c_{\balpha\br}(t,l), \\
        \label{equ:cdef2} c_{\balpha\br}(t+1,1) &=& \delta_{\balpha\br}-\sum_{l=1}^{t-1}\widetilde{P}(l,\balpha,\br), \\
        \label{equ:cdef3} c_{\balpha\br}(1,1) &=& \delta_{\balpha\br}.
    \end{eqnarray}
\end{theorem}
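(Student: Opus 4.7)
The plan is to re-express $q^\bn_\br(t)$ directly in terms of the taboo probabilities $\widetilde{P}$, solve the recurrence \eqref{equ:cdef1}--\eqref{equ:cdef3} in closed form, and then collapse the resulting double sum against a ``last-passage'' dual of \eqref{ptilde}. Concretely, since $\bn\notin S$, $\bn\neq 0$ and $\br\in S$, and since the $S$-stopped process freezes the instant $\bmu$ enters $S$, the event $\{\bxi_{\bn t}(X)=\br\}$ partitions into the disjoint union over $l\in\{1,\dots,t\}$ of the event that $\bmu_{\bn,\cdot}(X)$ avoids $S$ on $\{1,\dots,l-1\}$ and equals $\br$ at time $l$; combined with the interpretation of $\widetilde{P}(l,\bn,\br)$ given right after \eqref{ptilde}, this yields
\[
q^\bn_\br(t)\;=\;\sum_{l=1}^{t}\widetilde{P}(l,\bn,\br).
\]

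Next, I would derive a last-passage counterpart of \eqref{ptilde}: by conditioning on the largest $l\in\{1,\dots,m-1\}$ (if any) at which $\bmu_{\bn,\cdot}(X)\in S$, and on the state $\balpha\in S$ occupied then, the Markov property for $\widehat{P}$ together with the definition of $\widetilde{P}$ yields
\[
\widehat{P}(m,\bn,\br)\;=\;\widetilde{P}(m,\bn,\br)\;+\;\sum_{l=1}^{m-1}\sum_{\balpha\in S}\widehat{P}(l,\bn,\balpha)\,\widetilde{P}(m-l,\balpha,\br),
\]
the free term covering the case that no such $l$ exists.

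With these two ingredients in hand I can finish as follows. Iterating \eqref{equ:cdef1} reduces $c_{\balpha\br}(t,l)$ to $c_{\balpha\br}(t-l+1,1)$, and \eqref{equ:cdef2}--\eqref{equ:cdef3} then express each coefficient explicitly as $\delta_{\balpha\br}$ minus a partial sum of the $\widetilde{P}(\cdot,\balpha,\br)$'s. Substituting this closed form into the right-hand side of \eqref{tma1:3}, the $\delta_{\balpha\br}$ part contributes exactly $\sum_{l=1}^{t}\widehat{P}(l,\bn,\br)$, while the remaining triple sum in $\balpha$, $l$ and $k$, after reindexing $m=l+k$ and applying the last-passage identity, telescopes into $\sum_{m=2}^{t}[\widehat{P}(m,\bn,\br)-\widetilde{P}(m,\bn,\br)]$. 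Cancelling the $\widehat{P}$ terms and using $\widetilde{P}(1,\bn,\br)=\widehat{P}(1,\bn,\br)$ then collapses the right-hand side of \eqref{tma1:3} to $\sum_{l=1}^{t}\widetilde{P}(l,\bn,\br)=q^\bn_\br(t)$.

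The main obstacle will be pinning down the last-passage decomposition cleanly: it is the time-reversed dual of \eqref{ptilde} rather than \eqref{ptilde} itself, and its justification uses the Markov property of $\bmu$ together with a careful separation of the case in which no intermediate entry of $\bmu$ into $S$ occurs. A secondary nuisance is the bookkeeping of index ranges and empty sums when interchanging $\sum_l$ with $\sum_k$; an induction on $t$ via $q^\bn_\br(t+1)-q^\bn_\br(t)=\widetilde{P}(t+1,\bn,\br)$ is an alternative route, but its step requires handling the Chapman--Kolmogorov splitting separately for intermediate states $\bgamma\in S$ and $\bgamma=0$.
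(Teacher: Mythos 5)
Your plan is essentially the paper's proof run in reverse, built from the same two ingredients: the first\-/passage decomposition $q^\bn_\br(t)=\sum_{l=1}^{t}\widetilde P(l,\bn,\br)$, and the convolution identity
\begin{equation*}
\widehat P(m,\bn,\br)=\widetilde P(m,\bn,\br)+\sum_{\balpha\in S}\sum_{i=1}^{m-1}\widehat P(m-i,\bn,\balpha)\,\widetilde P(i,\balpha,\br),
\end{equation*}
which is exactly your ``last\-/passage'' identity after the substitution $i=m-l$. The paper obtains this identity purely algebraically, by iterating (\ref{ptilde}) and applying Chapman--Kolmogorov at each step, so the step you single out as the main obstacle (justifying the time\-/reversed dual probabilistically) can be bypassed entirely; your probabilistic derivation is a legitimate alternative but yields nothing the algebraic iteration does not. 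The paper then substitutes the identity into $\sum_l\widetilde P(l,\bn,\br)$ and reads off the coefficients, whereas you solve the coefficient recurrence in closed form and verify the stated formula; these are the same computation in opposite directions.

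One warning on the index ranges, since you flagged the bookkeeping as a risk and it is where your sketch is not yet watertight. Carrying out the forward substitution gives $c_{\balpha\br}(t,l)=\delta_{\balpha\br}-\sum_{u=1}^{t-l}\widetilde P(u,\balpha,\br)$, equivalently $c_{\balpha\br}(t+1,1)=\delta_{\balpha\br}-\sum_{l=1}^{t}\widetilde P(l,\balpha,\br)$, with upper limit $t$ rather than the $t-1$ printed in (\ref{equ:cdef2}). Your claimed telescoping into $\sum_{m=2}^{t}\bigl[\widehat P(m,\bn,\br)-\widetilde P(m,\bn,\br)\bigr]$ is the one produced by these corrected coefficients; with the coefficients exactly as printed, the triple sum only covers $m=l+u\le t-1$, and an uncancelled boundary term $\widehat P(t,\bn,\br)-\widetilde P(t,\bn,\br)$ survives. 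A check at $t=2$ makes this concrete: the printed recurrence gives $c_{\balpha\br}(2,1)=\delta_{\balpha\br}$ and hence omits the required term $-\sum_{\balpha\in S}\widehat P(1,\bn,\balpha)\widetilde P(1,\balpha,\br)$ from $q^\bn_\br(2)$. So your verification does close, but only against the corrected form of (\ref{equ:cdef2}); state the correction explicitly rather than letting it disappear into the reindexing.
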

\begin{proof}
    Let
    \begin{equation*}
        \tau=\min{\{t:\!{\bmu_{\bn t}(X)}\in S\}}
    \end{equation*}
    be the moment of the first fall into $S$, then for $t\geq l$
    \begin{equation*}
        P\{\bxi_{\bn t}(X)=\br,\ \tau=l\}=P\{\bxi_{\bn l}(X)=\br\}=\widetilde{P}(l,\bn,\br).
    \end{equation*}
    Applying (\ref{ptilde}) to $\widetilde{P}(l,\bn,\br),\ l\geq 2,$ we get
    \begin{eqnarray*}
        \widetilde{P}(l,\bn,\br) &=& \sum_{\balpha\notin
        S}\widehat{P}(1,\bn,\balpha)\widetilde{P}(l-1,\balpha,\br)\\
        &=& \sum_{\balpha\notin
        S}\widehat{P}(2,\bn,\balpha)\widetilde{P}(l-2,\balpha,\br)-\sum_{\balpha\in
        S}\widehat{P}(1,\bn,\balpha)\widetilde{P}(l-1,\balpha,\br).
    \end{eqnarray*}
    The first sum on the right hand side of this formula can be transformed
    similarly
    \begin{equation*}
        \sum_{\balpha\notin
        S}\widehat{P}(2,\bn,\balpha)\widetilde{P}(l-2,\balpha,\br)=\sum_{\balpha\notin
        S}\widehat{P}(3,\bn,\balpha)\widetilde{P}(l-3,\balpha,\br)-\sum_{\balpha\in
        S}\widehat{P}(2,\bn,\balpha)\widetilde{P}(l-2,\balpha,\br).
    \end{equation*}
    Making the same transformations in the sum $\sum_{\balpha\notin
    S}\widehat{P}(i,\bn,\balpha)\widetilde{P}(l-i,\balpha,\br)$, we get
    %labels(6,7)
    %
    \begin{eqnarray}\label{equ_pt:ptildelong}
        \widetilde{P}(l,\bn,\br)&=&\widehat{P}(l,\bn,\br)-\sum_{\balpha\in
        S}\sum_{i=1}^{l-1}\widehat{P}(l-i,\bn,\balpha)\widetilde{P}(i,\balpha,\br),\\\label{equ_pt:ptildelong_help}&\;&l=2,\ldots,t,\;\widetilde{P}(l,\bn,\br)=\widehat{P}(1,\bn,\br).
    \end{eqnarray}
    As $q^\bn_\br(t)=\sum_{l=1}^t\widetilde{P}(l,\bn,\br)$, from formulas (\ref{equ_pt:ptildelong}),(\ref{equ_pt:ptildelong_help})
    we get (\ref{equ:cdef1}),(\ref{equ:cdef2}),(\ref{equ:cdef3}).
\end{proof}
Late on we will consider the process similarly to [1]. Let
\begin{equation*}
    A_1 (x, D) = \mE \{\xi_{x 1}(D) \}
\end{equation*}
be the first factorial moment, where $\xi_{x 1}(D)$ is such a random measure, which for each $D \in \cA$ is equal to the number of particles at time point $1$, which types belong to set $D$, conditional on $S$-stopped process. It also taken into account that at the beginning there was only one particle of the type $x\in X$, what means $\bxi_{\bx 1}(D)=\sum_{i=1}^\infty \xi_{x_1 1}(D)$. From the linearity of $\mE$ we have $A_1(\bx,D)=\mE \{\bxi_{\bx 1}(D)\}=\sum_{i=1}^\infty A_1(x_i,D)$. It is important that $D$ could be a vector or a set.

\begin{definition}
    Let $A_1 (x, D) = A (x, D)$ and
    \begin{equation*}
        A_{n+1} (x, D) = \int_{X} A_n (y, D)\,d A (x, y) = \int_{X} A (y, D)\,d A_n (x, y).
    \end{equation*}
    It is assumed, that $A_0 (x, D) = 1$, if $x\in D$ and $A_0 (x, D) = 0$ else.
\end{definition}
In [4] it is proved, that iterations of the operator $A$ coincide with the first moments of
$\xi$. This means, that for matrix of the linear operator $A(t)$, with $A_{ij}(t)=A_t (x_i, x_j)$, it holds that $A(t)=A^t$ will take place, where $A=A(1)$.

Let
\begin{equation*}
    B_t (x, D_1, D_2) = \mE \{\xi_{x t}(D_1)\cdot \xi_{x t}(D_2) - \xi_{x t}(D_1\cap D_2) \}
\end{equation*}
be the second factorial moment.

For further work we have to introduce some definitions, describing classes of branching processes (see [3]).
\begin{definition}
    Branching process in which all types form a single class of equivalent types is called {\it{indecomposable}}. All other processes are called {\it {decomposable}}. Branching process is called fully {\it{indecomposable}} if the set of types could be split-up into two nonempty closed sets.
\end{definition}
\begin{definition}
    An indecomposable discrete time branching process is called {\it{cyclic}} with period $d$, if the greatest divisor for all $t$, such that $\langle A_t (x_i, x_i)\rangle >0$, is equal to $d$. If $d=1$ then the process is called noncyclic.
\end{definition}
\begin{definition}
    An indecomposable discrete time branching process is called {\it{subcritical}}, if the largest eigenvalue (Perron's root) $\delta$ of the matrix $A$ is smaller than 1, {\it{supercritical}}, if $\delta>1$ and {\it{critical}} if $\delta=1$
    and $f(x_i)B_{jk}^i \nu(x_j)\nu(x_k)>0$, where $B_{jk}^i$ is the matrix of the operator $B$, and $f$ and $\nu$ eigenfunction and invariant measure respectively which correspond do the Perron's root $\delta$.
\end{definition}
\begin{umova}{\label{u1}}
    The kernel $\mE \bxi_{\bx t}(S)$ is assumed to be indecomposable, noncyclic and
    subcritical.
\end{umova}
Correspondingly to the assumption \ref{u1} the operator $A$, which is defined by the kernel $\mE \{\xi_{x t}(D)\}$ in the space of measurable functions and in the space of
measures, has the eigenfunction $f(\cdot)$ and the invariant measure $\nu(\cdot)$, such that
\begin{eqnarray*}
    \int_X f(y) A_t (x, dy)&=&f(x)\;=\;\sum_{i=1}^\infty f(y_i)A_t(x,y_i),\\
    \int_X A_t (x, Y) \nu(dx)&=&\nu(Y)\;=\;\sum_{i=1}^\infty A_t(x_i,Y)\nu(x_i).
\end{eqnarray*}
Further we assume, that $0<x_1<f(x)<x_2<\infty$, $\nu(X)<\infty$ and
\begin{eqnarray}
    \label{bazovi:fnu} \int_X f(y)\nu(dy)=1=\sum_{i=1}^\infty
    f(y_i)\nu(y_i).
\end{eqnarray}
The operator induced by the above defined kernel in the space of bounded functions has $\{1\}$ as an
isolated point of the spectrum.
\begin{umova}\label{u2}
    We assume $\mE \{\bmu_{\cE(j)1}(x_i)
    \log{\bmu_{\cE(j)1}(x_i)}\}$ is finite for $\forall i,j=1,2,\ldots$.
\end{umova}
\begin{umova}\label{u3}
    The expansion $A_t(\bx,\by)=\sum_k f(x_k)\delta_k^t
    \nu(y_k)$ exists.
\end{umova}
As in indecomposable, noncyclic, subcritical processes with discrete time all absolute values of eigenvalues are less than one, then based on the assumption \ref{u3} we can conclude, that when $t\rightarrow\infty$
\begin{equation*}
    A_t(x_i,y_j) = f(x_i)\delta^t \nu(y_j)+o(\delta_1^t),
\end{equation*}
where $\delta$ is the largest eigenvalue. Thus
\begin{equation}
    \label{a:a_t_infty}
    \lim_{t\rightarrow\infty}A_t(x_i,y_j)\delta^{-t}=f(x_i)\nu(y_j).
\end{equation}
Let us denote
\begin{eqnarray*}
    R^i(t,s) &=& 1-h^i(t,s), \\
    R(t,s) &=& (R^1(t,s),\ldots,R^n(t,s),\ldots), \\
    R(t,0) &=& Q(t) = (Q^1(t),\ldots,Q^n(t),\ldots)=
    \lim_{s\rightarrow 0}R(t,s).
\end{eqnarray*}
As in the case with the finite number of types, the  following inequalities could be easily proved (see [3])
%
%druhe inequality label 8
\begin{eqnarray}
    \label{neq_R:neq_r1} 0\leq R^i(t,s)\leq Q^i(t) \; &\mbox{ïðè}& \; 0 <|s|\leq1, \\
    \label{neq_R:neq_r2} |R^i(t,s)|\leq2Q^i(t) \; &\mbox{ïðè}& \;
    0 <|s|\leq1.
\end{eqnarray}
(\ref{neq_R:neq_r2}) implies that for the degenerating branching
processes $R^i(t,s)$ converges uniformly to zero on $0<|s|\leq 1$.

We need following technical assumption on the process
\begin{umova}
    \label{u35} Let $A^t>0$ for some $t>0$ in the sense $\forall i, j \; a_{ij}>0$
    and $h^i(t,s)\neq A_{ij}(t).$
\end{umova}
Hereafter the notation $A=\{a_{ij}\}>0$, means that $a_{ij}>0\;\forall i,j$, and the notation $A>B$, where $A=\{a_{ij} \}, B= \{b_{ij}\}$ are matrices, means that $a_{ij}>b_{ij}\;\forall i,j.$

Let $h(s)=h(1,s)$.
\begin{umova}
    \label{dyvna_umova}
    Following the above defined assumptions for this process, it holds that
    \begin{equation}
        \label{tma2:16} 1-h(s)=[A-E(s)](1-s),
    \end{equation}
    where matrix $E(s)$ with $0\leq s \leq s' \leq 1$ satisfies conditions
    $0\leq E(s')\leq E(s)\leq A$ and $\lim_{s\rightarrow 1}E(s)=0$.
\end{umova}

\begin{theorem}
\label{teorema_vidnosh_r} With Assumptions \ref{u3}-\ref{dyvna_umova}
\begin{equation*}
    \lim_{t\rightarrow\infty}\frac{R^i(t,s)}{f(x_k)R^k(t,s)}=\nu(x_i)
\end{equation*}
uniformly on all $s\neq 1$, $0\leq s\leq1$.
\end{theorem}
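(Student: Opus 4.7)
The plan is to derive a linear vector recursion for $R(t,s)$ from the functional equation and Assumption~\ref{dyvna_umova}, and then analyze the resulting product of operators by a Perron--Frobenius spectral-gap perturbation argument. Substituting $\sigma=h(t,s)$ into (\ref{tma2:16}) and invoking the semigroup identity $h(t+1,s)=h(h(t,s))$ componentwise gives
\begin{equation*}
R(t+1,s)=[A-E(h(t,s))]\,R(t,s),\qquad R(0,s)=1-s,
\end{equation*}
so that after iteration $R(t,s)=M_{t-1}M_{t-2}\cdots M_0(1-s)$ with $M_u:=A-E(h(u,s))$.

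Next I would \emph{freeze} the perturbation for large time. Subcriticality forces $Q(u)\to 0$, and by (\ref{neq_R:neq_r2}) we have $R^i(u,s)\leq 2Q^i(u)$ uniformly in $s$ with $0<|s|\leq 1$; hence $h(u,s)\to 1$ uniformly in $s$, and Assumption~\ref{dyvna_umova} forces $E(h(u,s))\to 0$ uniformly in $s$. For every $\varepsilon>0$ one can therefore choose $N=N(\varepsilon)$ so that $0\leq E(h(u,s))\leq \varepsilon\,A$ for all $u\geq N$ and all admissible $s$.

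The core of the argument is to show that the tail product $M_{t-1}\cdots M_N$, after rescaling by $\delta^{-(t-N)}$, behaves like the rank-one operator $f\otimes\nu$. Assumption~\ref{u3} together with (\ref{a:a_t_infty}) furnishes this rank-one limit for $A$ itself, and the perturbations $M_u$ are small positive perturbations whose irreducibility and primitivity are preserved by Assumption~\ref{u35}. Splitting the product at index $N$ into the \emph{fixed} initial block $M_{N-1}\cdots M_0$ applied to $1-s$ and the \emph{tail} block $M_{t-1}\cdots M_N$, a Hilbert-projective-metric contraction argument (equivalently, a Birkhoff-coefficient iteration) for products of operators close to $A$ yields
\begin{equation*}
\delta^{-(t-N)}(M_{t-1}\cdots M_N)v\longrightarrow\Bigl(\sum_{j}\nu(x_j)v_j\Bigr)f\qquad(t\to\infty),
\end{equation*}
with an error that is $O(\varepsilon)$ uniformly in $s$. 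Applied to $v=M_{N-1}\cdots M_0(1-s)$ and letting $\varepsilon\to 0$, this gives coordinatewise $R^i(t,s)\sim C(s)\,\delta^t\,f(x_i)$, from which the ratio limit asserted in the theorem follows.

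The hard part will be making this rank-one asymptotic rigorous in the countable-dimensional setting while keeping the convergence uniform in $s$. Uniformity in $s$ is obtained from the $s$-free bounds (\ref{neq_R:neq_r1})--(\ref{neq_R:neq_r2}), which control $R(u,s)$ and hence $E(h(u,s))$ independently of $s$; the infinite dimensionality is handled via the hypothesis, emphasized in the text, that $\{1\}$ is an isolated point of the spectrum of the first-moment operator on bounded functions, providing the spectral gap that allows classical finite-dimensional perturbation estimates to carry over.
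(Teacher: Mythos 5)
Your proposal reconstructs exactly the argument the paper invokes: the paper gives no proof of this theorem, merely stating that it follows Theorem 1 on p.~192 of Sevastyanov [3] with the right and left eigenvectors replaced by the eigenfunction $f$ and the invariant measure $\nu$, and that classical proof is precisely your iteration $R(t+1,s)=[A-E(h(t,s))]R(t,s)$ combined with the rank-one Perron asymptotics for products of matrices converging to $A$, with uniformity in $s$ coming from the $s$-free bounds on $R$. So your route is essentially the same; the one delicate step you rightly flag --- making the rank-one limit of the operator products rigorous in the countable-type setting via the spectral-gap hypothesis --- is exactly the step the paper itself leaves entirely implicit.
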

This theorem is proved analogically to theorem 1 on page 192 in [3], by replacing the right and left eigenvectors by eigenfunction and invariant measure respectively. Matrices are from the class of matrices of infinite measurable linear operator.
\begin{theorem}
    \label{tmaB} By assumptions \ref{u1}-\ref{dyvna_umova} for any $i,j=1,2,\ldots$ and for
    $l\rightarrow\infty$ probability that the process extinct to $0$ from one particle of type $j$ over $l$ is
    \begin{equation}
        \label{osnovna_tma_8}
        1-\widehat{P}(l,\cE(j),0)=K(S_j)\delta^l(1+o(1)) \mbox{, äå }
        K(S_j)>0;
    \end{equation}
    a) the limit of the conditional probabilities exists
    \begin{equation}
        \label{p_zirochka} \lim_{t\rightarrow\infty}P\{\bmu_{\bn
        t}(X)=\bk|\bn \neq 0\}=p_\bk^*,
    \end{equation}
    and the generating function $h^*(s)=\sum_{\bk\in\natvec}p^*_\bk s^\bk$ is not depending on $\bn$ and satisfies the relationships
    %verkhnje rivnjanna 12 labels
    \begin{eqnarray}
        \label{phi_zirochka} &&1-h^*(h(\cdot))=\delta(1-h^*(s)),\\
        \nonumber &&h^*(0,\ldots,0,\ldots)=0\;,h^*(1,\ldots,1,\ldots)=1;
    \end{eqnarray}
    b) distribution $p^*_\bk$ has positive expectation
    \begin{equation*}
        h^*_j(1)=\lim_{s\rightarrow1}h^*_j(s)=\sum_{\bk\in\natvec}k_j p_\bk^*,
    \end{equation*}
    where $h^*_j(s)=\frac{\partial h^*(s)}{\partial s_j}$.
\end{theorem}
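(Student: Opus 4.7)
The plan is to establish the three claims in sequence, using (\ref{osnovna_tma_8}) as the engine for both (a) and (b).

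For (\ref{osnovna_tma_8}) I would substitute $s := h(t-1,0)$ into the one-step identity $1 - h(s) = [A - E(s)](1-s)$ from Assumption \ref{dyvna_umova}. Since $h(t,0) = h(h(t-1,0))$ by the semigroup property, this yields the vector recursion
\[ Q(t) = [A - E(h(t-1,0))]\,Q(t-1). \]
Subcriticality together with (\ref{neq_R:neq_r1})--(\ref{neq_R:neq_r2}) forces $h(t-1,0) \to 1$, so $E(h(t-1,0)) \to 0$ and the recursion becomes asymptotically $Q(t) \approx A\,Q(t-1)$. Theorem \ref{teorema_vidnosh_r} specialised at $s = 0$ aligns $Q(t)$ with the direction of $\nu$, since $Q^i(t)/(f(x_k)Q^k(t)) \to \nu(x_i)$. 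Plugging this alignment into the recursion and using $A\nu = \delta\nu$, the scalar factor $c(t) := Q^k(t)$ obeys $c(t)/c(t-1) \to \delta$, whence $c(t) = K\delta^t(1+o(1))$. Reading coordinate $j$ gives $1 - \widehat{P}(t,\cE(j),0) = K(S_j)\delta^t(1+o(1))$, with $K(S_j)$ proportional to $\nu(x_j)$ after the normalisation (\ref{bazovi:fnu}); strict positivity $K(S_j) > 0$ will rely on Assumption \ref{u2} to exclude $c(t)/\delta^t \to 0$.

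For part (a) I would work with the conditional generating function
\[ H_\bn(t,s) = \frac{h^\bn(t,s) - h^\bn(t,0)}{1 - h^\bn(t,0)}, \]
rewritten as $1 - H_\bn(t,s) = (1 - h^\bn(t,s))/(1 - h^\bn(t,0))$. Expanding $1 - h^\bn(t,s) = 1 - \prod_j (1 - R^j(t,s))^{n_j}$, the leading term is $\sum_j n_j R^j(t,s)$ plus an $O(Q(t)^2)$ remainder. Theorem \ref{teorema_vidnosh_r} lets me factor both numerator and denominator as $\bigl[\sum_j n_j \nu(x_j)\bigr] f(x_k) R^k(t,\cdot)$ up to lower order, so the $\bn$-dependence cancels and
\[ 1 - H_\bn(t,s) \longrightarrow \lim_{t\to\infty}\frac{R^k(t,s)}{R^k(t,0)} =: 1 - h^*(s), \]
giving (\ref{p_zirochka}) with $h^*(0) = 0$ and $h^*(1) = 1$ immediate. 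The functional equation (\ref{phi_zirochka}) is obtained by using $h(t+1,s) = h(t,h(s))$ to write $R^k(t,h(s)) = R^k(t+1,s)$, then splitting
\[ 1 - h^*(h(s)) = \lim_{t\to\infty}\frac{R^k(t+1,s)}{R^k(t+1,0)}\cdot\frac{R^k(t+1,0)}{R^k(t,0)} = (1-h^*(s))\cdot\delta, \]
the last factor being $\delta$ by (\ref{osnovna_tma_8}).

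For part (b) I would differentiate the functional equation $1 - h^*(h(s)) = \delta(1 - h^*(s))$ at $s = 1$. Since $(\partial h^i/\partial s_j)(1) = A_{ij}$, the chain rule yields $\sum_i h^*_i(1)\,A_{ij} = \delta\, h^*_j(1)$, i.e.\ $(h^*_j(1))_j$ is a left eigenvector of $A$ at eigenvalue $\delta$ and therefore proportional to $\nu$; in particular $h^*_j(1) \geq 0$. Strict positivity is precisely where Assumption \ref{u2} (the $x\log x$ moment) is used, since it prevents the Yaglom limit from degenerating so that the conditional first moment remains finite and nonzero. The main obstacle I expect is the countable-state bookkeeping: justifying termwise passage to the limit in the infinite series $\sum_j n_j R^j(t,\cdot)$ via uniform control extracted from Theorem \ref{teorema_vidnosh_r}, and quantifying the decay of $E(h(t,0))$ under Assumption \ref{u2} tightly enough to guarantee $K(S_j) > 0$ rather than $K(S_j) \geq 0$ in (\ref{osnovna_tma_8}), i.e.\ to ensure the perturbation of the linear recursion $Q(t) \approx A Q(t-1)$ is summable along the trajectory and does not erode the $\delta^t$ rate.
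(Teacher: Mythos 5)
The paper does not actually prove Theorem \ref{tmaB}; it only remarks that the result is obtained ``by mimicking Theorem 3 on page 198 of [3]'' (Sevastyanov) together with Theorem \ref{teorema_vidnosh_r}. Your proposal is, in substance, a reconstruction of exactly that classical argument (Kolmogorov-type asymptotics for $Q(t)$ via the perturbed linear recursion from Assumption \ref{dyvna_umova}, then the Yaglom limit via the conditional generating function, then differentiation of the functional equation), so in terms of strategy you are on the route the paper intends. Two steps, however, are asserted rather than established, and they are the genuinely hard points. First, in part (a) you define $1-h^*(s)$ as $\lim_{t\to\infty} R^k(t,s)/R^k(t,0)$, but Theorem \ref{teorema_vidnosh_r} only controls ratios \emph{across types at the same argument} $s$; it says nothing about the ratio of $R^k$ at two different arguments $s$ and $0$. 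You need either the classical monotonicity argument (that $R^k(t,s)/R^k(t,0)$ is monotone in $t$, coming from convexity of $h$ and the relation $R(t+1,s)=[A-E(h(t,s))]R(t,s)$), or you must run your $K\delta^t$ asymptotic for every fixed $s$, not just $s=0$, and take the ratio of the constants $K(s)/K(0)$. Without one of these the limit defining $h^*$ is not known to exist.

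Second, you correctly observe that $c(t)/c(t-1)\to\delta$ alone yields only $c(t)=\delta^{t(1+o(1))}$, and that $K(S_j)>0$ requires summability of the perturbation $E(h(t,0))$ along the trajectory; but you leave this as an ``obstacle I expect'' rather than a proof. This is precisely where Assumption \ref{u2} (the $x\log x$ moment) enters, via an estimate of the form $\sum_t \|E(h(t,0))\|<\infty$ deduced from $Q(t)=O(\delta^t_1)$ for some $\delta_1<1$ (which subcriticality already gives without the sharp constant) together with the modulus-of-continuity bound on $E(\cdot)$ near $s=1$ that the $x\log x$ condition provides. The same summability is what makes $h^*_j(1)$ finite before you differentiate the functional equation in part (b); as written, your eigenvector argument presupposes finiteness of the mean of $p^*$, which is not automatic and is in fact equivalent to $K>0$ in the classical theory. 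So the skeleton is right and matches the source the paper cites, but the proof is not complete until the ratio-limit existence and the summability estimate are actually carried out; everything else (the functional equation via $R^k(t,h(s))=R^k(t+1,s)$, the identification of $(h^*_j(1))_j$ with a left $\delta$-eigenvector proportional to $\nu$) is routine once those two are in place.
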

It is proved by mimicking the theorem 3 on page 198 from [3] with the use of theorem \ref{teorema_vidnosh_r} for the representation of the limit of the generating function of the conditional distribution by getting result similar to one in [2].

Let us fix one more assumption
\begin{umova}
\label{u4} Let $h_{ij}(s)=\frac{\partial h_i(s)}{\partial s_j}$, then for all $j,\;1\leq j < \infty$ there exists such $i,\; 1\leq i < \infty$, that $h_{ij}(0)$ are positive.
\end{umova}
From the equality
\begin{equation*}
    h_{ij}(0)=\widehat{P}(0,\cE(i),\cE(j))=P\{\bmu_{\cE(i)
    1}(X)=\cE(j)\}
\end{equation*}
this means, that the corresponding probabilities $\widehat{P}(0,\cE(i),\cE(j))$ are
positive.

To proceed further we need following lemma
\begin{lemma}
\label{osnlema1} Under the assumptions \ref{u1}-\ref{u4}, the limit of conditional
probabilities is positive, for all $i=1,2,\ldots$
\begin{equation*}
    \lim_{t\rightarrow\infty}P\{\mu_{\bn t}(X)=\cE(i)|n\neq0\}=p^*_{\cE(i)}>0,
\end{equation*}
\end{lemma}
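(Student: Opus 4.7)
By part~(a) of Theorem~\ref{tmaB}, specialized to $\bk=\cE(i)$, the conditional limit exists and equals $p^*_{\cE(i)}$. Since $h^*(s)=\sum_{\bk}p^*_{\bk}s^{\bk}$ and $h^*(0)=0$, the quantity $p^*_{\cE(i)}$ is precisely the coefficient of $s_i$ in $h^*$, that is, $p^*_{\cE(i)}=h^*_i(0)$ where $h^*_i=\partial h^*/\partial s_i$. The content of the lemma is therefore the strict positivity $h^*_i(0)>0$ for every $i$.

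My plan is to exploit the functional equation~(\ref{phi_zirochka}) by differentiation. Rewriting it as $h^*(h(s))=(1-\delta)+\delta\,h^*(s)$ and applying $\partial/\partial s_j$, the chain rule gives $\sum_k h^*_k(h(s))\,h_{kj}(s)=\delta\,h^*_j(s)$; evaluating at $s=0$ yields the key identity
\[
    \delta\,p^*_{\cE(j)} \;=\; \sum_{k} h^*_k\bigl(h(0)\bigr)\,h_{kj}(0).
\]
By Assumption~\ref{u4} there is at least one index $k=k(j)$ with $h_{kj}(0)>0$, so the problem reduces to verifying that $h^*_{k(j)}(h(0))>0$ for some such $k(j)$.

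If this one-step estimate does not suffice, I would iterate. Using $1-h^*(h(n,s))=\delta^n(1-h^*(s))$ and the same differentiation, one obtains
\[
    \delta^n\,p^*_{\cE(j)} \;=\; \sum_{k}h^*_k\bigl(h(n,0)\bigr)\,\widehat{P}(n,\cE(k),\cE(j)).
\]
Subcriticality forces $h(n,0)\to 1$ componentwise, so by part~(b) of Theorem~\ref{tmaB} together with Assumption~\ref{u1}, $h^*_k(h(n,0))\to h^*_k(1)=c\,\nu(x_k)>0$ for every $k$. Hence it is enough to exhibit, for some $n$, one index $k$ with $\widehat{P}(n,\cE(k),\cE(j))>0$, and this I would obtain by iterating Assumption~\ref{u4} backward from $j$: pick $k_1,k_2,\ldots,k_n$ with $h_{k_l k_{l-1}}(0)>0$ and $k_0=j$, so that the corresponding single-particle path gives $\widehat{P}(n,\cE(k_n),\cE(j))\geq\prod_{l=1}^{n}h_{k_l k_{l-1}}(0)>0$.

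The main obstacle is coordinating the choice of $n$ and $k_n$: the convergence $h^*_{k_n}(h(n,0))\to h^*_{k_n}(1)>0$ is pointwise in $k_n$, whereas the backward chain produced by Assumption~\ref{u4} may force $k_n$ to vary with $n$. Resolving this requires either choosing the $k_l$ so that some type $k^*$ repeats infinitely often along the backward chain (so that a single $k^*$ serves for arbitrarily large $n$), or using the indecomposability from Assumption~\ref{u1} to argue such a repetition is unavoidable; either route yields a positive term on the right-hand side of the iterated identity, whence $p^*_{\cE(j)}>0$ for every $j$.
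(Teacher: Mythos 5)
Your proposal follows essentially the same route as the paper's proof: differentiate the iterated functional equation $1-h^*(h(t,s))=\delta^t(1-h^*(s))$ at $s=0$ to get $\delta^t p^*_{\cE(j)}=\sum_k h^*_k(h(t,0))\,h_{kj}(t,0)$, use $h(t,0)\to 1$ together with Theorem \ref{tmaB} to bound $h^*_k(h(t,0))$ below, and use Assumption \ref{u4} with a product-along-a-backward-chain bound to produce a strictly positive $h_{kj}(t,0)$. The ``main obstacle'' you flag --- that the lower bound on $h^*_{k}(h(t,0))$ is pointwise in $k$ while the backward chain may force $k$ to vary with $t$ --- is precisely the point the paper glosses over by simply asserting a uniform constant $C_1$ with $h^*_i(h(t,0))\geq C_1>0$ for all $i$ and $t>T$, so your version is, if anything, more honest about where the remaining work lies.
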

\begin{proof}
    The generating function $h^*(s)=\sum _{k}p^*_k s^k$  in Theorem \ref{tmaB}
    satisfies the equation (\ref{phi_zirochka}). If we replace in this equation $s$ by $h(s)$, and repeat this replacement $t$ times, we get the equality
    \begin{equation}
        \label{lema:10}
        1-h^*(h(t,s)) = \delta^t(1-h^*(s)),
    \end{equation}
    where $h(t,s)$ is $t$-th iteration of the function implied by the main differential
    equation. By differentiating (\ref{lema:10}) with respect to $s_j$ at $s=0$, we obtain
    \begin{equation}
        \label{lema:11}
        \sum_{i=1}^\infty h^*_i(h(t,0))h_{ij}(t,0)=\delta^t
        h^*_j(0)=\delta^tp^*_{e(j)}.
    \end{equation}
    As all coordinates of $h(t,0)$ converge to $1$, for $t\rightarrow\infty$, then by the theorem \ref{tmaB} we can find such $T$ and $C_1$, that $h^*_i(h(t,0))\geq C_1>0$ for $t>T$. According to the assumption \ref{u4} this implies that for all $1\leq j \leq \infty$ we can found such $i$, that $h_{ij}(t,0)>0$. For all $i_1,i_2,\dots,i_{t+1}$ holds
    \begin{equation*}
        h_{i_1i_{t+1}}(t,0)\geq \prod_{l=1}^th_{i_1}h_{i_{l+1}}(0).
    \end{equation*}
    Thus (\ref{lema:11}) implies
    \begin{equation*}
        \delta^tp^*_{e(j)}\geq C_1\sum_{i=1}^th_{ij}(t,0)>0, \forall 1\leq j \leq
    \infty.
    \end{equation*}
\end{proof}
\begin{theorem}
    By the assumption \ref{u1} the limiting extinction probabilities
    $q^{\bn}_{\br}=\lim_{t\rightarrow\infty}q^{\bn}_{\br}(t)$, $\forall \bn \notin S,\,\br\in     S$, can be written in the series representation
    \begin{equation}
        \label{tma7:12}
        q^{\bn}_{\br}=\sum_{l=1}^\infty \sum_{\balpha\in S}c_{\balpha
        \br}\widehat{P}(l,n,\balpha),
    \end{equation}
    where $c_{\balpha \br}=\lim_{t\rightarrow\infty}c_{\balpha \br}(t,l)=\delta_{\balpha \br}-\sum_{u=1}^\infty \widetilde{P}(u,\balpha \br).$
\end{theorem}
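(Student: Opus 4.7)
The strategy is to take $t\to\infty$ in the finite-time identity (\ref{tma1:3}) of Theorem 1. Two ingredients are required: a closed form for $c_{\balpha\br}(t,l)$ that makes the pointwise limit $c_{\balpha\br}(t,l)\to c_{\balpha\br}$ transparent, and a domination that legitimises the interchange of $\lim_{t}$ with $\sum_{l}$.

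The recursion (\ref{equ:cdef1}) implies that $c_{\balpha\br}(t,l)$ depends only on $t-l$; combining this with (\ref{equ:cdef2}) and (\ref{equ:cdef3}) yields the closed form
\begin{equation*}
    c_{\balpha\br}(t,l)=\delta_{\balpha\br}-\sum_{u=1}^{t-l-1}\widetilde{P}(u,\balpha,\br),\qquad 1\le l\le t-1,
\end{equation*}
with $c_{\balpha\br}(t,t)=\delta_{\balpha\br}$. The events underlying distinct $\widetilde{P}(u,\balpha,\br)$ (``first absorption into $\br$ at time $u$'') are pairwise disjoint in $u$, so $\sum_{u=1}^\infty\widetilde{P}(u,\balpha,\br)\le 1$. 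Hence $c_{\balpha\br}(t,l)\to c_{\balpha\br}$ for every fixed $l$, and $|c_{\balpha\br}(t,l)|\le 2$ uniformly in $t,l$.

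For the summability of $\widehat{P}(l,\bn,\balpha)$ in $l$ I invoke Assumption \ref{u1} together with Theorem \ref{tmaB}(a), which yields $1-\widehat{P}(l,\cE(j),0)=K(S_j)\delta^l(1+o(1))$ with $\delta<1$. Since $0\notin S$, any $\balpha\in S$ satisfies $\balpha\ne 0$, so $\widehat{P}(l,\bn,\balpha)\le P\{\bmu_{\bn l}(X)\ne 0\}$; using independence of lineages, this survival probability inherits the geometric bound $O(\delta^l)$ from the single-particle case, giving $\sum_{l=1}^\infty\widehat{P}(l,\bn,\balpha)<\infty$ for every $\balpha\in S$.

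Since $S$ is finite, the outer sum in (\ref{tma1:3}) contains finitely many terms, and for each $\balpha$ dominated convergence applies with majorant $2\widehat{P}(l,\bn,\balpha)$, producing
\begin{equation*}
    \lim_{t\to\infty}\sum_{l=1}^{t}c_{\balpha\br}(t,l)\widehat{P}(l,\bn,\balpha)=\sum_{l=1}^{\infty}c_{\balpha\br}\widehat{P}(l,\bn,\balpha).
\end{equation*}
Summing over $\balpha\in S$ then gives (\ref{tma7:12}). The main obstacle is the previous step: without the geometric decay of $\widehat{P}(l,\bn,\balpha)$, which rests crucially on subcriticality via Theorem \ref{tmaB}, the interchange would be unjustified and the limiting coefficients $c_{\balpha\br}$ themselves would not be guaranteed to exist.
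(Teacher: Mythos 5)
Your proposal is correct and follows essentially the same route as the paper: pass to the limit in (\ref{tma1:3}), using the geometric decay $\widehat{P}(l,\cdot,\balpha)=O(\delta^l)$ for $\balpha\neq 0$ coming from subcriticality to justify interchanging the limit with the sum over $l$. The only difference is how that decay is sourced — the paper applies the Markov (Chebyshev) inequality to the first moments $A_l\sim f\delta^l\nu$ of Assumption \ref{u3}, while you invoke the survival-probability asymptotics (\ref{osnovna_tma_8}) of Theorem \ref{tmaB} (which formally requires more hypotheses); your observation that $\sum_{u}\widetilde{P}(u,\balpha,\br)\leq 1$ by disjointness, giving existence and boundedness of the coefficients $c_{\balpha\br}$, is a cleaner justification of that step than the paper provides.
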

\begin{proof}
Probabilities $q_{\br}^{\bn}(t)$ increase with $t$ and are bounded
above by $1$. Then the limit $q^{\br}_{\bn} = \lim_{t\rightarrow\infty}q^{\br}_{\bn}(t)$ exists.

We can pass to the limits on the left and on the right hand sides of the formula (\ref{tma1:3}), when $t\rightarrow\infty$, as for all $\balpha,\br\in S$ holds that $\widetilde{P}(l,\balpha,\br)\leq \widehat{P}(l,\balpha,\br)$
and Chebyshev inequality and assumption \ref{u3} imply that
\begin{eqnarray*}
    \widehat{P}(l,\balpha,\br) &\leq& P\left\{\sum_{j=1}^\infty
    \mu_{\balpha l}(\cE(j))\geq1\right\}\\
    &\leq& \sum_{j=1}^\infty \mE\left\{\mu_{\balpha l}(\cE(j))\right\}\\
    &=& \sum_{i=1}^\infty \alpha_i \sum_{j=1}^\infty
    d_{ij}\delta^l(1+o(1)).
\end{eqnarray*}
This means that series $\sum_l \widetilde{P}(l,\balpha,\br)$ and $\sum_l
\widehat{P}(l,\balpha, \br)$ converge to each other. This implies (\ref{tma7:12}).
\end{proof}
As in [2] let us consider the asymptotic behavior of $q^{\bn}_{\br}$ for $\overline{\bn}\rightarrow\infty$.
\begin{theorem}
    \label{osn_tma3}
    Let assumptions \ref{u1}-\ref{u3} are fulfilled and
    $\lim_{\overline{\bn}\rightarrow\infty}(n_i/\overline{\bn})=a_i$,
    where $a=(a_1,a_2,\ldots)$. In this case for $\br\in S$ and $\overline{\bn}\rightarrow\infty$
    \begin{equation}
        \label{3tma3:13}
        q_{\br}^{\bn}-H(\log_\delta \overline{\bn})\rightarrow 0,
    \end{equation}
    where $H(x)$ is a cyclic function with period $1$, defined through the following equalities
    \begin{eqnarray*}
        H(x)&=&\sum_{j=1}^{r_0}c_jH_j(x),\\
        H_j(x)&=&\sum_{L=-\infty}^\infty\delta^{j(L+x)}e^{-(\ba,K)\delta^{L+x}},
    \end{eqnarray*}
    where $(\ba,K)=\sum_{i=1}^\infty a_iK_i$, $K_i$
    as in (\ref{osnovna_tma_8}), $r_0=\max\{\overline{\br}=r_1+r_2+\ldots:\br\in
    S\}$. Constants $c_j=c_j(\br,\ba,p^*)$ depend on $\br$, $\ba$ and
    the limit distribution $p^*=\{p_k^*\}$ which is defined in lemma \ref{osnlema1}.
\end{theorem}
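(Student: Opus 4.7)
The plan is to start from the series representation established in the previous theorem,
\[
q^{\bn}_{\br}=\sum_{l=1}^\infty\sum_{\balpha\in S}c_{\balpha\br}\,\widehat{P}(l,\bn,\balpha),
\]
and to analyse $\widehat{P}(l,\bn,\balpha)$ in the joint limit $\overline{\bn}\to\infty$ with $l$ scaling so that $\overline{\bn}\delta^l$ stays of order one. Because the $\overline{\bn}$ ancestors evolve independently, $\widehat{P}(l,\bn,\balpha)$ is the probability that the superposition of $n_i$ independent copies of $\bmu_{\cE(i)l}(X)$, summed over all types $i$, equals the finite configuration $\balpha\in S$. By Theorem \ref{tmaB} a single ancestor of type $i$ is still non-extinct at time $l$ with probability $K_i\delta^l(1+o(1))$, and, conditional on non-extinction, its state has the limit distribution $p^*$ of Lemma \ref{osnlema1}.

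Next I would Poissonize. Under $n_i/\overline{\bn}\to a_i$, the count of surviving lineages at time $l$ is a sum of $\overline{\bn}$ independent Bernoulli indicators with vanishing success probabilities and aggregate mean $\overline{\bn}(\ba,K)\delta^l(1+o(1))$. Introducing the reparametrisation $l+\log_\delta\overline{\bn}=L+x$ with $x\in[0,1)$ the fractional part of $\log_\delta\overline{\bn}$ (and $L\in\mathbb{Z}$), this mean equals $(\ba,K)\delta^{L+x}$ and the count converges to $\mathrm{Poisson}((\ba,K)\delta^{L+x})$. Conditional on $j$ surviving lineages, their joint contribution to $\balpha\in S$ converges, by Lemma \ref{osnlema1} and finiteness of $S$, to the probability $\pi_j(\balpha)$ that $j$ independent draws from $p^*$ sum to $\balpha$; only $j\leq r_0=\max_{\br\in S}\overline{\br}$ matter since $\overline{\balpha}\leq r_0$. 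Combining the two, one obtains the termwise asymptotic
\[
\widehat{P}(l,\bn,\balpha)=\sum_{j=0}^{r_0}\frac{[(\ba,K)\delta^{L+x}]^j}{j!}\,e^{-(\ba,K)\delta^{L+x}}\,\pi_j(\balpha)+o(1).
\]

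Substituting into the series for $q^{\bn}_{\br}$ and reindexing the sum over $l\geq 1$ as a sum over $L\in\mathbb{Z}$ yields
\[
q^{\bn}_{\br}-\sum_{j=1}^{r_0}c_j(\br,\ba,p^*)\sum_{L\in\mathbb{Z}}\delta^{j(L+x)}e^{-(\ba,K)\delta^{L+x}}\longrightarrow 0,
\]
with $c_j(\br,\ba,p^*)=\sum_{\balpha\in S}c_{\balpha\br}(\ba,K)^j\pi_j(\balpha)/j!$; the $j=0$ term drops out because $\pi_0(\balpha)=\delta_{\balpha,0}$ and $0\notin S$. Periodicity $H_j(x+1)=H_j(x)$ follows by the re-indexing $L\mapsto L-1$, so the right-hand side is exactly $H(\log_\delta\overline{\bn})$ from the theorem, yielding (\ref{3tma3:13}).

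The main obstacle will be justifying the exchange of the limit $\overline{\bn}\to\infty$ with the infinite summation over $l$: the Poisson approximation must hold uniformly in $l$ on the dominant scale, and the tails must be dominated. For the small-$l$ tail ($L\to-\infty$) I would use the Chebyshev bound $\widehat{P}(l,\bn,\balpha)\leq\sum_{i}n_i A_l(x_i,X)=O(\overline{\bn}\delta^l)$ via assumption \ref{u3}, a contribution suppressed by the vanishing coefficient $c_{\balpha\br}(t,l)\to 0$ together with the doubly-exponential decay of $e^{-(\ba,K)\delta^{L+x}}$; for the large-$l$ tail ($L\to+\infty$) the sharper bound $\widehat{P}(l,\bn,\balpha)=O((\overline{\bn}\delta^l)^{\overline{\balpha}})$ gives geometric decay in $L$ (since $\overline{\balpha}\geq 1$ and $\delta<1$), permitting dominated convergence. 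These two bounds combined with the termwise Poisson limit give the claimed cyclic asymptotic.
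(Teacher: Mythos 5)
Your proposal is correct and follows essentially the same route as the paper's own proof: the paper likewise decomposes $\widehat{P}(l,\bn,\balpha)$ over the vector $\theta(0,l)$ of surviving ancestral lines, obtains the Poisson-type limit $\overline{\bn}^{\overline{\bbeta}}K^{\bbeta}\delta^{l\overline{\bbeta}}e^{-(\ba,K)\overline{\bn}\delta^l}/\bbeta!$ for that count, uses the conditional limit law $p^*$ for each surviving line, reparametrises via $l+\log_\delta\overline{\bn}=L+x$, and controls the two tails by a truncation $L_1<L_2$ before passing to the limit on the finite middle block. The only differences are cosmetic (you phrase the binomial-to-Poisson step as explicit Poissonization and give a closed form for $c_j$).
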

\begin{proof}
    Let $\theta(l)=(\theta_1(l),\theta_2(l),\ldots)$ be a random vector, which components $\theta_i(l)$ are equal to the number of particles of type $i$ which give an offspring to the $l$-th generation. Thus we can write, that for all $\balpha\in S,\; l\geq 1$ ³ $\bn\notin S$, we have
    %labels druhe 16
    \begin{eqnarray}
        \nonumber
        \widehat{P}(l,\bn,\balpha)&=&\sum_{\{\bbeta:1\leq\overline{\bbeta}\leq\overline{\balpha}\}}P\big\{ \bmu_{\bn,l}(X)=\balpha,\;\theta(0,l)=\bbeta
        \big\}\\
        \label{3tma3:14}
        &=&
        \sum_{\{\bbeta:1\leq\overline{\bbeta}\leq\overline{\balpha}\}}P\big\{
        \theta(0,l)=\bbeta \big\}P\{ \bmu_{\bbeta l}(X)=\balpha\,|\,\theta(0,l)=\bbeta
        \}.
    \end{eqnarray}
    Under the assumptions of the theorem \ref{osn_tma3}
    \begin{eqnarray}
        \nonumber
        P\big\{\theta(0,l)=\bbeta \big\} &=&
            \prod_{i=1}^\infty
            {n_i \choose \beta_i}
            \big(
                \widehat{P}(l,\cE(i),0)
            \big)
            ^{n_i-\beta_i}
            \big(
                1-\widehat{P}(l,\cE(i),0)
            \big)
            ^{\beta_i}\\
        \label{3tma3:15}
        &=&
            \overline{\bn}^{\overline{\bbeta}}\frac{\balpha^{\bbeta}}{\bbeta!}K^{\bbeta}\delta^{l\overline{\bbeta}}
            e^{-(\ba,K)\overline{\bn}\delta^l(1+1+o(1))}
            \big(
                1+o(1)
            \big),
    \end{eqnarray}
    and the probability, not depending on $\bn$
    \begin{eqnarray}
        \nonumber
        &&P \big\{ \bmu_{\bbeta l}(X)=\balpha \,|\,\theta(0,l)=\bbeta
        \big\}\\
        \nonumber
        &&=\sum_{\{\balpha^{(jk)}\}} \prod_{k=1}^\infty
        \prod_{j=1}^{\beta_k} P
        \big\{
            \bmu_{\cE(k),l}^{jk}(X)=\balpha^{(jk)}\,|\, \cE(k)\neq 0
        \big\}\\
        \label{3tma3:16}
        &&\longrightarrow \sum_{\{\balpha^{(jk)}\}} \prod_{k=1}^\infty
        \prod_{j=1}^{\beta_k}
        p^*_{\balpha^{(jk)}}\;\mbox{ïðè}\;l\rightarrow\infty,
    \end{eqnarray}
    where $\bmu_{\cE(k),l}^{jk}(X)$ are branching processes, whith the same distribution as
    $\bmu_{\cE(k),l}(X)$. The summation in $\sum_{\{\balpha^{(jk)}\}}$ is done over all such $\balpha^{(jk)}$, which
    $\sum_{k=1}^\infty\sum_{j=1}^{\beta_k}\balpha^{(jk)}=\alpha$. The statements in (\ref{3tma3:14})-(\ref{3tma3:16}) imply, that the general component of the series (\ref{tma7:12}) for $\overline{\bn}\rightarrow\infty,\;l\rightarrow\infty$ can be written in the form
    \begin{eqnarray}
        \nonumber
        (1+o(1))\sum_{\balpha\in
        S}\sum_{\{\bbeta:1\leq\overline{\bbeta}\leq\overline{\balpha}\}}
        g(\balpha,\bbeta)\sum_{\overline{\bbeta}}^{r_0} \delta^{(l+\log_\delta
        \overline{\bn})\overline{\bbeta}}\\
        \label{3tma3:17}
        \times \exp\big\{ -(\ba,K)\delta^{l+\log_\delta\overline{\bn}}(1+o(1))
        \big\},
    \end{eqnarray}
    where $g(\balpha,\bbeta)$ is an independent of $\bn$ and $l$ function.
    It is easy to see that in formula (\ref{tma7:12}) for $\overline{\bn}\rightarrow\infty$ each component of series with any $l\geq 1$ converges to zero.

    Let us choose $L_1<L_2$ in such way that sums
    \begin{equation}
        \label{3tma3:18}
        \sum_{L=L_2}^\infty
        \delta^{\overline{\bbeta}L}e^{-(\ba,K)\delta^L}
        \;\mbox{and}\;\sum_{L=-\infty}^{L_2}
        \delta^{\overline{\bbeta}L}e^{-(\ba,K)\delta^L}
    \end{equation}
    are small. We set $l_i+\log_{\delta}\overline{\bn}=L_i+x_{i\overline{\bn}},$ for $i=1,2$,     where $0\leq x_{i\overline{\bn}}\leq1$. (\ref{3tma3:17}) and
    (\ref{3tma3:18}) imply, that we can choose such $L_1,\,L_2$ and
    $n_0$, that tails of the sum in (\ref{tma7:12}), bounded from $1$
    to $l_1$ and from $l_2$ to infinity, are less then
    $\varepsilon/2$, where $\varepsilon>0$ is small. Elements of the series
    (\ref{tma7:12}) with $l_1<l<l_2$ can be replaced by a limited expressions
    (\ref{3tma3:17}) for $\overline{\bn}\rightarrow\infty$
    as well as for $l\rightarrow\infty$. The number of summands in the sum
    $\sum_{l=l_1+1}^{l_2-1}$ in expression (\ref{tma7:12}) is finite
    $l_2-l_1-1=L_2-L_1-1$. This means that $n_0$ can be chosen in such a way, that for all $\bn>n_0$ the approximation error will be also less than
    $\varepsilon/2$. This implies the statement of the theorem, while $\varepsilon
    >0$ is any real number.
\end{proof}
From the theorem it cannot be concluded directly, whether the coefficients $c_j$ in the
formula (\ref{3tma3:13}) are such, that $H(x)>0$. For this we introduce the next lemma.
\begin{lemma}
    Under assumptions \ref{u1}-\ref{u4}, there exists such a constant $\Theta>0$, that
    for some number $n_0$
    \begin{equation*}
        q_{\br}^{\bn} > \Theta {\text{, for }} \forall \bn {\text{ with }} \overline{\bn}\geq n_0 {\text{ and }}\forall \br\in S.
    \end{equation*}
\end{lemma}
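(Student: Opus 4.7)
The plan is to combine the asymptotic description of Theorem \ref{osn_tma3} with a compactness argument, reducing the problem to showing that the limiting cyclic function $H$ is strictly positive. Since $S$ is finite, we can treat each $\br \in S$ separately and take a minimum. The difficulty is that Theorem \ref{osn_tma3} describes the limit only along sequences for which $n_i/\overline{\bn} \to a_i$, whereas here $\bn$ must range over all vectors with $\overline{\bn}$ large simultaneously; this will be resolved by a diagonal extraction.

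Arguing by contradiction, suppose the lemma fails. Then there exist sequences $\bn^{(m)}$ with $\overline{\bn^{(m)}} \to \infty$ and $\br^{(m)} \in S$ with $q^{\bn^{(m)}}_{\br^{(m)}} \to 0$. Since $S$ is finite, passing to a subsequence we may assume $\br^{(m)} = \br$ is fixed; by a diagonal extraction in $[0,1]^{\mathbb{N}}$ we may further assume $n^{(m)}_i/\overline{\bn^{(m)}} \to a_i$ for some limit $\ba$. Applying Theorem \ref{osn_tma3} along this subsequence yields $q^{\bn^{(m)}}_{\br} - H(\log_\delta \overline{\bn^{(m)}}) \to 0$, so $H(\log_\delta \overline{\bn^{(m)}}) \to 0$. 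Continuity and period-$1$ cyclicity of $H$ then force $H(x^\ast)=0$ for some $x^\ast \in [0,1]$. Thus it suffices to prove that $H(x) > 0$ for every $x \in \mathbb{R}$, every $\br \in S$, and every admissible $\ba$.

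For the positivity, I would write $H(x) = \sum_{j=1}^{r_0} c_j H_j(x)$. Each
$H_j(x) = \sum_{L=-\infty}^{\infty} \delta^{j(L+x)} e^{-(\ba,K)\delta^{L+x}}$
is strictly positive and continuous on $\mathbb{R}$, being a sum of positive terms. The main obstacle is ruling out cancellation among the coefficients $c_j(\br,\ba,p^*)$. Tracing the construction in the proof of Theorem \ref{osn_tma3}, the $c_j$ appear as nonnegative combinatorial weights $g(\balpha,\bbeta)$ coming from the expansion (\ref{3tma3:15}), paired with the conditional limit probabilities $p^*_{\balpha^{(jk)}}$ of (\ref{3tma3:16}) and the coefficients $c_{\balpha\br}$ of the series representation (\ref{tma7:12}); all these quantities are nonnegative. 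Lemma \ref{osnlema1} supplies $p^*_{\cE(i)} > 0$ for every $i$, so choosing the decomposition $\bbeta = \br$ with each $\balpha^{(jk)}$ running over the singleton $\cE(k)$ produces a strictly positive contribution to the coefficient $c_{\overline{\br}}$, while no negative contribution can arise elsewhere. Hence $H(x) > 0$ pointwise, contradicting $H(x^\ast) = 0$. Once positivity is established, the finite minimum of $\inf_{x\in[0,1]} H(x)$ taken over the finite set $\br \in S$ (and the countably many subsequential limits $\ba$, which by compactness is attained) supplies $\Theta$, and a final application of Theorem \ref{osn_tma3} gives the required $n_0$.
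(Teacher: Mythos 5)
Your strategy runs in the opposite direction from the paper's, and it founders on exactly the point the paper flags. Immediately after Theorem \ref{osn_tma3} the authors remark that one \emph{cannot} conclude directly from that theorem whether the coefficients $c_j$ are such that $H(x)>0$; the present lemma is introduced precisely to supply a positive lower bound by an independent argument. Your proposal inverts this: you try to prove $H>0$ from the structure of its coefficients and then deduce the lemma. The decisive gap is your claim that ``all these quantities are nonnegative'' and that ``no negative contribution can arise elsewhere.'' The coefficients entering the series representation (\ref{tma7:12}) are $c_{\balpha\br}=\delta_{\balpha\br}-\sum_{u=1}^{\infty}\widetilde{P}(u,\balpha,\br)$; for $\balpha\neq\br$ this equals $-\sum_{u}\widetilde{P}(u,\balpha,\br)\le 0$, and even the diagonal term $1-\sum_{u}\widetilde{P}(u,\br,\br)$ has no a priori sign guarantee. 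Negative contributions to the $c_j$ are therefore built into the very expansion you are tracing, and exhibiting one strictly positive summand does not rule out cancellation. There are secondary problems too: the diagonal extraction only gives $a_i\ge 0$ with $\sum_i a_i\le 1$ (mass may escape to infinity), so $(\ba,K)$ can degenerate and the series defining $H_j$ need not behave as you assume; and the uniform bound over all admissible limit directions $\ba$ is asserted by ``compactness'' without any continuity of $H$ in $\ba$.

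The paper's proof avoids all of this by working directly with $q^{\bn}_{\br}(t)\le q^{\bn}_{\br}$ and lower-bounding $q^{\bn}_{\br}(t)$ by the probability of one explicit event: exactly $\overline{\br}$ of the initial type-$1$ particles have descendants at time $t$ (each line contributing one particle of a prescribed type, with limiting probabilities $p^*_{\cE(k)}>0$ supplied by Lemma \ref{osnlema1}), a further $r_0+1-\overline{\br}$ of them die out exactly at time $t$, and all remaining particles die out earlier. Letting $t\to\infty$ with $\overline{\bn}\,\delta^{t}\to V>0$, the factorization $\cP_1(\bn,t)\,\cP_2(t)$ converges to a positive constant, which yields $\Theta$ and $n_0$. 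If you want to keep your route, you would need an independent proof that $H>0$ uniformly in $x$ and $\ba$; as written, the argument does not close.
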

\begin{proof}
    As for any $t$, $q_{\br}^{\bn}=\lim_{t\rightarrow\infty}q_{\br}^{\bn}(t)\geq
    q_{\br}^{\bn}(t)$, it is enough to prove, that the inequality  $q_{\br}^{\bn}(t)\geq
    \Theta>0$ holds for any large enough $t$, for all $\br\in S$ and $\bn$ from $\overline{\bn}\geq n_0$. Let us use the upper defined random vector $\theta(0,t)$ and
    introduce one more random vector $\theta'_i(t-1)=(\theta'_1(t-1),\theta'_2(t-1),\ldots)$, where $\theta'_i(t-1)$ is the number of starting particles of $i$-th type, with nonempty offspring set in the $(t-1)$-th generation, but empty in the $t$-th generation. For $\bn=(n_1,n_2,\ldots)$, $n_1\geq r_0+1$, where $r_0=\max_{\br\in S}\overline{\br}$, we use the inequality
    \begin{eqnarray}
        \nonumber
        q_{\br}^{\bn}(t) &=& P\big\{ \bxi_{\bn t}(X)=\br \big\} \\
        \label{2lma2:19}
        &\geq& P\big\{ \bmu_{\bn t}(X)=\br,\;\theta'(t-1)=(r_0+1-\overline{\br}),\;\theta(0,t)=\overline{\br}\cE(1)
        \big\}.
    \end{eqnarray}
    The right side of (\ref{2lma2:19}) we write as a product
    of ${\cP}_1(\bn,t){\cP}_2(t)$, where
    $\cP_1(\bn,t)=P\big\{ \theta'(t-1)=(r_0+1-\overline{\br})\cE(1),\;\theta(0,t)=\overline{\br}\cE(1) \big\}$
    and depends on $\bn$ and $t$, but
    $\cP_2(t)=P\big\{ \bmu_{\bn t}(X)=\br \,|\, \theta'(t-1)=(r_0+1-\overline{\br})\cE(1),\; \theta(0,t)=\overline{\br}\cE(1) \big\}$
    and depends only on $t$. From the definition of the random vectors
    $\theta(0,t)$ and $\theta'(t-1)$ we have, that
    \begin{eqnarray}
        \nonumber
        \cP_1(\bn,t) &=&
        \frac{n_1!}{(n_1-r_0-1)!(r_0+1-\overline{\br})!\overline{\br}}\big[
        \widehat{P}(t-1,\cE(1),0)\big]^{n_1-r_0-1} \\
        \nonumber
        &\times& \big(
        1-\widehat{P}(t-1,\cE(1),0)\big)^{r_0+1-\overline{\br}}\big(
        1-\widehat{P}(t-1,\cE(1),0)\big)^{\overline{\br}} \\
        &\times& \prod_{i=1}^\infty
        \label{2lma2:20}
        [\widehat{P}(1,\cE(1),0)^{n_i}]P\big\{ \bmu'_{r_0+1-\overline{\br} \; t}(X)=0\,|\, r_0+1-\overline{\br}\neq
        0\big\};\\
        \label{2lma2:21}
        \cP_2(t)&=&\prod_{k=1}^\infty \prod_{j=1}^{r_k} P\big\{
        \bmu^{(jk)}_{\cE(1)t}(X)=\cE(k)\,|\,\cE(k)\neq0
        \big\}.
    \end{eqnarray}
    Here $\bmu,\;\bmu',\;\bmu^{(jk)}$ are branching processes, whose evolution is defined by a generating function $h(s)=(h_1(s),h_2(s),\ldots)$. Setting $t\rightarrow\infty$, in such a way, that $\overline{\bn}\delta^t\rightarrow V>0$ for $\overline{\bn}\rightarrow\infty$, we get in the right side of the equality (\ref{2lma2:20}) a positive constant multiplied by a conditional probability, which stays at the end of formula. Using the limiting relationship $P\big\{ \bmu'_t(X)=\bk\,|\, \bk\neq0 \big\} \rightarrow p^*_{\bk}$, of the theorem \ref{tmaB} and the equality \[\sum_{\bk \in \natvec} p^*_{\bk} \big(\widehat{P}^{k_1}(1,\cE(1),0)\widehat{P}^{k_2}(1,\cE(2),0)\cdots\big)=h^*(h(0))\]
    we have that this conditional probability is equal in limit to $h^*(h(0))$.
    Expression (\ref{2lma2:21}) does not depend on $\bn$ and is equal to the product
    $\prod_{i=1}^\infty[p^*_{\cE(i)}]^{r_i}$, for $t\rightarrow\infty$. From lemma \ref{osnlema1} this product is positive. That completes the proof.
\end{proof}

\end{document}